\tikzset{
  state/.style={circle,draw,minimum size=6ex},
  arrow/.style={-latex, shorten >=1ex, shorten <=1ex}}
\newcommand{\cP}{\mathcal{P}}
\newcommand{\N}{\mathbb{N}}
\newcommand{\bP}{\mathbb{P}}
\newcommand{\ke}{\mathfrak{e}}
\newcommand{\kP}{\mathfrak{P}}
\newcommand{\cL}{\mathbb{L}}
\newcommand{\cS}{\mathcal{S}}
\newcommand{\kB}{\mathfrak{B}}
\newcommand{\cI}{\mathcal{I}}
\newcommand{\cT}{\mathcal{T}}
\newcommand{\cK}{\mathcal{K}}
\newcommand{\Xn}{X_{[1:n]}}
\newcommand{\Dir}{\textsf{Dir}}
\newtheorem{theorem}{Theorem}[section]
\newtheorem{definition}{Definition}
\newtheorem{lemma}[theorem]{Lemma}
\newtheorem{corollary}[theorem]{Corollary}
\newtheorem{remark}{Remark}
\newtheorem{example}{Example}
\numberwithin{equation}{section}
\theoremstyle{plain}
\begin{document}

\begin{frontmatter}
\title{Bayesian inference for stationary data on finite state spaces 
}
\runtitle{Bayesian inference for stationary data}

\begin{aug}
\author{\fnms{F. Moritz} \snm{von Rohrscheidt}\thanksref{t2}\ead[label=e1]{moe0405@gmx.de}},

\thankstext{t2}{Supported by the DFG (German research foundation); grant 1953}
\runauthor{F.M. von Rohrscheidt}

\affiliation{Ruprecht-Karls-Universit{\"a}t Heidelberg 
}

\address{Fritz Moritz von Rohrscheidt\\
Ruprecht-Karls-Universit{\"a}t Heidelberg\\
Institut f{\"u}r Angewandte Mathematik\\
Im Neuenheimer Feld 205 \\
69120 Heidelberg \\
\printead{e1}\\
\phantom{E-mail:\ } 
}

\end{aug}

\begin{abstract}
In this work the issue of Bayesian inference for stationary data is addressed. Therefor, a parametrization of a statistically appropriate subspace of the shift-ergodic probability measures on a Cartesian product of some finite state space is given using an inverse limit construction. Moreover, an explicit model for the prior is given by taking into account an additional step in the usual stepwise sampling scheme of data. An update to the posterior is defined by exploiting this augmented sample scheme. Thereby, its model-step is updated using a measurement of the empirical distances between the classes of considered models.
\end{abstract}

\begin{keyword}[class=MSC]
\kwd[Primary ]{60M05}
\kwd{60M09}
\kwd[; secondary ]{28D99}
\end{keyword}

\begin{keyword}
\kwd{Bayesian statistics, stationary data, parametrization of shift-ergodic measures}
\end{keyword}

\end{frontmatter}

\section{Introduction}
During the 20th century and beyond, the theory of stochastic processes has created a huge field of models which study and unify a wide range of real world problems. This field is also the natural habitat for the statistician who infers the data-generating mechanism by means of observed data. Thereby, keeping track of large sample accuracy of inference procedures, inference for processes evolving in discrete time - in contrast to high-frequency approaches to data evolving in continuous time - ought to require stationarity as a minimal assumption on the model. Otherwise, if the data-generating mechanism changes during time, the statistician either has only access to non-extendable data or must incorporate this change as prior knowledge which is often not available.
However, unless additional assumptions on the structure of the data is made, the space of probability measures that could have generated stationary data is overwhelmingly large. Most prominent assumptions on the structure of the data which reduce the space of considered probability measures were studied in depth and constitute of symmetry constraints like exchangeability, partial exchangeability and others; c.f. \cite{kallenberg2006probabilistic}. As the perhaps greatest advocate of the concept of subjectivistic probability theory of the 20th century, Bruno de Finetti has intensively emphasized the idea of exchangeability in his book \cite{de1974theory} as well as in a published conservation with Hugo Hamaker, see \cite{finetti1979hamaker}. The recurring argument is that the data should have attached some judgment which possesses an actual meaning. Roughly speaking, the judgment of exchangeability means that the order of collected data is not necessary information and hence does not affect statistical inference. For the case of binary data \cite{finetti1937} proved the celebrated representation theorem which states that any sequence of random variables is exchangeable if and only if it is representable as the mixture of sequences of independent and identically distributed random variable. Thereby the mixture is understood in form of an integral w.r.t. a measure that is interpreted to represent ones uncertainty in the true data-generating measure. This subjectivistic approach to probability paves the way for an epistemic kind of inference that enables one to probabilistically predict future outcomes based on past observations. This is mainly due to the fact that exchangeable data, unlike independent data, are in general not uncorrelated, c.f. \cite{kingman1978uses}. 

Subsequently de Finetti's theorem was generalized in many respects. \cite{hewitt1955symmetric} generalized the underlying state space of the data to Polish spaces. \cite{aldous1981representations} introduced the concept of exchangeability to arrays of random numbers in order to prove a conjecture of \cite{dawid1978extendibility} about the representability of spherically invariant random matrices. Another important generalization was given with respect to sequences of random variables whose joint law is only invariant with respect to a suitable subfamily of the symmetric group. This subfamily can be thought of appropriate block-transformations of the coordinates which do not affect the number of transitions the process makes between the states. De Finetti already hinted to the fact that those sequences are representable as mixtures of Markov chains and \cite{diaconis1980finetti} gave the rigorous proof. This is based on the idea of the exchangeability of the successor blocks of one recurrent state, showing that recurrence is a necessary condition for such mixtures. However, recurrence is certainly implied by some overruling statistical judgment, namely stationarity. Hence, it not surprising that mixtures of Markov chains can be studied from a statistical viewpoint which was done in \cite{freedman1962invariants}. Freedman showed even more; any stationary probability measure on a Cartesian product of some countable state space which is summarized by a statistic possessing $S$-structure is representable as an integral mixture of shift-ergodic probabilities being also summarized by this statistic. Thereby, $S$-structure is a certain equivalence-invariance under stacking the data. Since transition counts have $S$-structure and since Markov measures are shown to equal the shift-ergodic probabilities being summarized by transition counts, \cite{freedman1962invariants} obtained the de Finetti theorem for Markov chains. Related ideas dealing with statistical mixtures in an extended way can be found in \cite{lauritzen1988extremal}, see also \cite{schervish1995theory} for a well accessible source. 

More recently, the Italian school took up the issue of mixing up probability measures in further depth. \cite{fortini2002mixtures} confirm a conjecture of de Finetti which says that a recurrent probability on a Cartesian product of some countable state space is a mixture of Markov measures if and only if the associated matrix of successor-states is exchangeable within rows. In this work they also considered the more challenging case of an uncountable state space by using suitable weak approximations of the appearing kernels. \cite{epifani2002characterization} extends related ideas to semi-Markov processes. The works \cite{fortini2000exchangeability,fortini2014predictive} characterize mixtures of probabilities using predictive distributions. Moreover, in \cite{fortini2014predictive} it is proven that the push-forward of a prior under an appropriate parametrization of the Markov measures samples the rows of the associated stochastic matrix independently if and only if transition counts are predictively sufficient.
In \cite{fortini2012hierarchical,fortini2012predictive} the authors shed light on the construction of priors obtained as weak limits of predictive schemes similar to the one by \cite{blackwell1973ferguson} yielding the Dirichlet process prior. These constructions yield priors concentrated on the space of Markov measures using reinforced urn processes. In \cite{diaconis2006bayesian} the construction of conjugate priors for reversible Markov measures is given and \cite{bacallado2011bayesian} extend these ideas to higher-order Markov measures and provide procedures for testing the order. Besides of that, \cite{lijoi2007bayesian} give a prove of a Doob-type consistency result for stationary models which, as one would expect, rely on martingale convergence arguments. It is interesting to see that theory can provide such a deep result which, at the same time, lacks applications in form of highly general models. This lack can best be explained by the overwhelming richness of the space of shift-ergodic measures that serves as the parameter space. Roughly, this richness relies on the fact that a shift-ergodic probability measures in general does not allow for a parametrization in a sequence of stochastic kernels which is ''Noetherian``of any degree, in the sense that it eventually remains constant. \cite{berti2000parametric,berti2003bayesian} underline this arguing that the appropriate mixing measure (occasionally called the Maitra measure in honor of Ashok Maitra, see \cite{maitra1977integral}) can be obtained as the law of the weak limit of a suitably chosen sequence of empirical measures increasing in the degree of dependence. This can also be reflected by considering invariance of probability measures with respect to transformations consisting of interchanging blocks (c.f. \cite[Proposition 27]{diaconis1980finetti}) whose initial and terminal strings of states, which have to coincide within the blocks to be admissible, increase continuously in their lengths.
From a rather philosophical point of view Bayesian inference for stationary data is quite challenging because one has to express one's uncertainty in the structure of the data itself and has to learn for it from \textit{one single} series of experiments.

Besides calling back the issue to mind, the aim of the present work is twofold. On the one hand it gives a parametrization of of a statistically suitable subspace of the shift-ergodic measures on the Cartesian product of some finite state space. On the other it provides an explicit model for a prior exhausting the arising parameter space as well as for its update to the posterior. For the former, above-mentioned idea of \cite{berti2000parametric} motivates the construction of the parameter space in terms of an appropriate limit. The model itself is then constructed by extending the usual step-wise sampling scheme, i.e. first sample the data-generating mechanism and then sample the data from it, by adding an additional step incorporating the degree of dependence among the data. The considered model can be argued to be non-informative in a specific way.

The paper is organized as follows. In section 2 the necessary preliminaries and notations are given. Section 3 deals with the parametrization of the shift-ergodic measures. Section 4 provides the explicit Bayesian model for stationary data. Section 5 concludes with an outlook to and a discussion of further issues.


\section{Preliminaries}

Let $S$ be a finite space with cardinality $\#S=s$. As usual, equip the Cartesian product $S^{\N}=\prod_{n\in\N}S$ with the product 
$\sigma$-field $\cS$ generated by the semi-algebra consisting of all cylinder sets of the form 
\begin{align*}
[x_1,x_2,\dotsc,x_k]=\left\{x\in S^{\N}:\pi_j(x)=x_j; j=1,\dotsc,k; k\in\N\right\},
\end{align*}
where $\pi_j$ denotes the projection onto the $j$-th coordinate.
Denote by $\cP(T)$ the space of all probability measures on a Polish space $(T,\cT)$. It is well known, see e.g. \cite{kechris1995descriptive}, that $\cP(T)$ itself becomes a Polish space using the appropriate Borel $\sigma$-field $\kB_{\cP}$ induced by weak convergence of measures. Equivalently, $\kB_{\cP}$ is the smallest $\sigma$-field which makes the mappings $\mu_T: P\mapsto P(T)$ measurable for all $T\in\cT$, c.f. \cite{ghosh2003bayesnonp}. Let $\theta:\cP(S^{\N})\rightarrow \cP(S^{\N})$ be the shift operator on the space of probability measures defined through
\begin{align*}
P([x_1,\dotsc,x_k]) \stackrel{\theta}{\longmapsto} 
P(\bigcup_{j\in S} [j,x_1,x_2,\dotsc,x_{k}])
\end{align*}

for all cylinder sets, where $\tilde{\theta}$ denotes the shift on $S^{\N}$. Furthermore, call a probability $P\in\cP(S^{\N})$ stationary if $\theta P=P$ holds. By Kolmogoroff's extension theorem, that is equal to 

\begin{align*}
\theta P([x_1,\dotsc,x_k])=P(\bigcup_{j\in S} [x_1,x_2,\dotsc,x_{k},j]),
\end{align*}

for all cylinder sets, which amounts to an expression of stationarity in terms of a symmetry constraint on $P$. Moreover, call a set $I\in\cS$ shift-invariant if it holds $\theta I\triangle I=\emptyset$, where $\triangle$ denotes the symmetric difference. It is easy to see that the family of invariant sets forms a $\sigma$-field $\cI$, known as the shift-invariant $\sigma$-field. A stationary probability $P$ is called shift-ergodic if it holds $P(I)P(I^c)=0$ for all shift-invariant sets $I$. Denote by $\cP^e(S^{\N})$ the space of all ergodic probability measures. By the ergodic decomposition theorem, c.f. \cite{varadarajan1963groups,maitra1977integral}, any stationary probability is uniquely representable as an integral mixture of ergodic probabilities, i.e. for any set $A\in\cS$ it holds
\begin{align*}\label{EDC}\tag{EDC}
P(A)=\int_{\cP^e(S^{\N})} Q(A) \Pi(dQ).
\end{align*}
Thereby, the distribution $\Pi\in\cP\left(\cP^e(S^{\N})\right)$, interpreted to express one's uncertainty, is uniquely determined and serves as the prior distribution in Bayesian statistics. 

Let $(\Omega,\mathcal{A},\bP)$ some probability space and $\left(X:\Omega\rightarrow S^{\N}\right)=\left(X_n:\Omega\rightarrow S\right)_{n\in\N}$ a discrete-time stochastic process with state space $S$. The law of a random object $Y$ with respect to $\bP$ shall be written as $\mathbb{L}[Y]$.
By virtue of \cite{berti2000parametric}, $\Pi$ can be thought of as the weak limit of empirical measures increasing by their degree of dependence $N\in\N_0$ which are given as
\begin{align*}
\ke_n^{(N)}(\cdot)=\frac{1}{n-N}\sum\limits_{k=1}^{n-N} \delta_{(X_j,\dotsc,X_{j+N})}(\cdot), \textsf{\qquad}n>N. \label{E} \tag{E}
\end{align*}
Precisely, they show that the sequence $(\ke_n^{(\lfloor n/2\rfloor)}\times \alpha)_{n\in \N_0}\in\cP^{\N}(S^{\N})$ consisting of arbitrarily augmented empirical measures (meaning that $\alpha\in\cP(S^{\N})$ makes $\ke_n^{(\lfloor n/2\rfloor)}$ a proper probability measure on the sequence state space) converges weakly to the mixing measure $\Pi$ as $n$ tends to infinity for any reasonable choice of $\alpha$ and for $\cL[X]$-almost all data. So, asymptotically speaking, the extension $\alpha$ does not play any role in this construction of $\Pi$. Notice that the choice for $N$ as the integer part of $n/2$ is quite arbitrary such that above sequence can be replaced by any other of the form $(\ke_n^{(N_n)}\times \alpha)_{n\in \N_0}$ such that $N_n\in \N$ and $n/N_n\stackrel{n \rightarrow \infty}{\longrightarrow}\infty$. This weak limit construction reflects the fact that the shift-ergodic probability measures are summarized by higher order transition counts are higher order Markov chains, which can easily be obtained as a slight generalization of \cite[Theorem 2]{freedman1962invariants}. Additional prior knowledge on the data in form of some additional symmetry judgment lets $\Pi$ concentrate on a proper subspace of $\cP^e(S^{\N})$. For instance, if the data is judged to be exchangeable, the support of $\Pi$ becomes the space of i.i.d.-measures.
From the viewpoint of functional analysis, $\Pi$ is given as the representing measure of the Choquet simplex $\cP(S^{\N})$, see e.g. \cite{phelps2001lectures} for details.

By a stochastic kernel from one measurable space $(S,\cS)$ to another $(T,\cT)$ it is meant a measurable mapping $\kappa:S \rightarrow \cP(T)$.


\section{Parametrization of shift-ergodic measures}

The present section is devoted to a decent parametrization of a statistically reasonable subset of the the shift-ergodic measures. The need for such a parametrization arises from the ergodic decomposition \eqref{EDC} which pinpoints the space of shift-ergodic measures as the appropriate parameter space for the issue of Bayesian statistics for stationary data. Under additional judgments of the data the parameter space reduces considerably, c.f. \cite{diaconis1980finetti,diaconis1985quantifying,diaconis1987dozen}. However, besides taking values in a finite state space, such an additional constraint on the data is not assumed here. 

Roughly, the idea of the mentioned parametrization can be described as follows. The Ionescu-Tulcea theorem (see e.g. \cite{klenke2013probability}) is exploited in order to parametrize a probability measure on a Cartesian product of the state space in terms of a sequence of kernels. In the case of a Markov measure, this sequence of kernels is basically constant and the determining kernel of degree one can be thought of as a stochastic matrix. In full analogy, in the general case, one can think of the higher-dimensional kernels as multi-linear mappings which are introduced in this section as so called stochastic tensors. By flattening these tensors according to some reasonable convention, see e.g. \cite{landsberg2012tensors}, one can also think of these tensors as a stochastic matrix determining a Markov chain evolving in a higher-dimensional space. As in the case of a usual Markov chain it is then shown that stationarity of the parametrized probability measure together with a statistically reasonable constraint on the parametrizing tensors implies the existence of a family of non-linear mappings such that some stochastic tensor is always given as the image of the one of next higher dimension under the associated mapping. The parameter space is then defined as the inverse limit alongside the sequence of kernels and the obtained family of mappings. Notice that inverse limit extensions have been used successfully in stochastics and statistics since Kolmogoroff's extension theorem. They always give rise to the existence of some abstract limiting object which, loosely speaking, can be seen to form a kind of closure extending finite procedures. More recent work on statistics using the idea of inverse limits can be found in \cite{orbanz2010conjugate,orbanz2011projective}. In the first paper, the author generalizes the idea of a natural conjugate prior for exponential families to the nonparametric case by pinning conjugate priors for all finite dimensional marginals and subsequently extending it by an inverse limit argument. The second paper deals with the construction of random probability measures similar to but in greater generality as the construction of the Dirichlet process.

Let $S=\left\{1,\dotsc,s\right\}$ be the state space of the data and let be given a sequence of kernels 
\begin{align*}
\left(\kappa_N:S^N\times \textsf{Pot}(S) \rightarrow [0,1] \right)_{N\in\N_0}
\end{align*}
 increasing by the degree of dependence $N$. Thereby the convention is used that $\kappa_0$ is an initial probability measure on the state space. By the Ionescu-Tulcea theorem there is one and only one probability measure $Q\in\cP(S^{\N})$ on the Cartesian product of the state space such that $Q=\bigotimes_{m\in\N_0}\kappa_m$. In accordance to the Markov case the stochastic kernels are written as $\kappa_N=\left(p_{s_1\cdots s_{N}}^{r_1\cdots r_N}\right)_{r_i,s_j,\in S;i,j=1,\dotsc,N}$, where the contra-variant index $(r_1,\dotsc,r_N)$ denotes the position right before and the co-variant index $(s_1,\dotsc,s_N)$ the position right after a jump of an $N$-dimensional process evolving in discrete time. Thus, in order to model a process of data possessing dependence of length $N$, it is assumed to hold
\begin{align*}
p_{s_1\cdots s_{N}}^{r_1\cdots r_N}
\begin{cases}
=0, & \textsf{if }  (r_1,\dotsc,r_{N-1},r_N)\neq(r_2,\dotsc,r_N,s_{N})\\
\in [0,1], & \textsf{otherwise}
\end{cases}.
\end{align*}

Letting for $N\geq 1$ denote $\cK_N\subset [0,1]^{s^{2N}}$ the space of stochastic tensors of degree $N$, equip $\cK_N$ with the topology of coordinate-wise convergence $\cT_N$ in order to obtain a topological space $\left(\cK_N,\cT_N\right)$ and a measurable space $\left(\cK_N,\kB_N\right)$, respectively, where $\kB_N$ is the $\cT_N$-associated Borel $\sigma$-field.
The subsequent result establishes the meaning of stationarity in terms of the parametrizing sequence of kernels.

\begin{theorem}\thlabel{thm}
Let $\left(\kappa_N:S^N\times \textsf{Pot}(S)\rightarrow (0,1)\right)_{N\in\N_0}$ be a sequence of positive stochastic tensors and $Q=\bigotimes_{N\in\N_0}\kappa_N\in\cP(S^{\N})$. Then, stationarity of $Q$ implies the unique existence of a family of continuous mappings 
\begin{align*}
\left(\phi_N:\cK_N\rightarrow \cK_{N-1}\right)_{N\in\N},
\end{align*}
 such that $\phi_N(\kappa_N)=\kappa_{N-1}$.
Moreover, if $Q$ is stationary it is also shift-ergodic.
\end{theorem}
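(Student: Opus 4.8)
\emph{Outline.} The plan is to flatten each kernel $\kappa_N$ into the transition matrix of a Markov chain on the block space $S^N$, to recover the relevant finite-dimensional marginal of $Q$ from that chain, and to take for $\phi_N$ the time-reversal (averaging) operation carrying the conditional law of the $(N+1)$-st coordinate to that of the $N$-th. Concretely, flatten $\kappa_N=\big(p^{r_1\cdots r_N}_{s_1\cdots s_N}\big)$ to the stochastic matrix $P_N$ on $S^N$ with $P_N\big((r_1,\dots,r_N),(r_2,\dots,r_N,s)\big)=p^{r_1\cdots r_N}_{r_2\cdots r_N\,s}$ and all other entries $0$. Positivity of $\kappa_N$ means that every admissible edge of the associated de Bruijn-type graph on $S^N$ carries positive weight, so $P_N$ is irreducible (and aperiodic, owing to the self-loops at the constant blocks); hence $P_N$ has a unique stationary law $\mu_N=\mu_N(\kappa_N)\in\cP(S^N)$, which is strictly positive and, being the solution of a linear system in the entries of $P_N$, is a rational --- in particular continuous --- function of $\kappa_N$.

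Writing $\nu_N:=\cL[(X_1,\dots,X_N)]$ under $Q$, observe that $(X_2,\dots,X_{N+1})$ is obtained from $(X_1,\dots,X_N)$ by appending a coordinate sampled from $\kappa_N$, so stationarity of $Q$ forces $\nu_N P_N=\nu_N$ and hence $\nu_N=\mu_N(\kappa_N)$ by uniqueness. Consequently the reversed conditionals of $Q$ can be read off $\mu_N(\kappa_N)$, and I would \emph{define}, for positive $\tau\in\cK_N$,
\begin{align*}
\phi_N(\tau)\big((r_1,\dots,r_{N-1});s\big):=\sum_{a\in S}\frac{\mu_N(\tau)(a,r_1,\dots,r_{N-1})}{\sum_{b\in S}\mu_N(\tau)(b,r_1,\dots,r_{N-1})}\,\tau\big((a,r_1,\dots,r_{N-1});s\big).
\end{align*}
This is a convex combination of probability vectors, hence a positive stochastic tensor of degree $N-1$, and it depends continuously on $\tau$ since $\mu_N(\cdot)$ does and the denominators are bounded away from $0$ locally. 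That $\phi_N(\kappa_N)=\kappa_{N-1}$ then follows by shifting indices with stationarity: one has
\begin{align*}
\kappa_{N-1}\big((r_1,\dots,r_{N-1});s\big)&=Q\big(X_{N+1}=s\mid X_2=r_1,\dots,X_N=r_{N-1}\big)\\
&=\sum_{a\in S}Q\big(X_1=a\mid X_2=r_1,\dots,X_N=r_{N-1}\big)\,\kappa_N\big((a,r_1,\dots,r_{N-1});s\big),
\end{align*}
and inserting the reversal identity $Q(X_1=a\mid X_2=r_1,\dots,X_N=r_{N-1})=\mu_N(\kappa_N)(a,r_1,\dots,r_{N-1})/\sum_{b\in S}\mu_N(\kappa_N)(b,r_1,\dots,r_{N-1})$, valid because $\nu_N=\mu_N(\kappa_N)$, recovers the defining sum of $\phi_N(\kappa_N)$.

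For uniqueness I read the statement as requiring one family $(\phi_N)$ that works simultaneously for every stationary $Q$. Given any positive $\tau\in\cK_N$, the stationary order-$N$ Markov chain whose $N$-block process has transition $\tau$ (started in $\mu_N(\tau)$) is a stationary probability on $S^\N$ with $\kappa_N=\tau$, so the computation above pins down the value of any admissible $\phi_N$ at $\tau$. As the positive tensors are dense in $\cK_N$, continuity then fixes each $\phi_N$ on all of $\cK_N$, which is the asserted uniqueness.

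For shift-ergodicity, the plan is to use that for a stationary $Q$ every shift-invariant set belongs to the tail $\sigma$-field $\bigcap_n\sigma(X_n,X_{n+1},\dots)$, so it suffices to prove this tail is $Q$-trivial, equivalently that $\tfrac1n\sum_{k=0}^{n-1}\big|Q(A\cap\tilde\theta^{-k}B)-Q(A)Q(B)\big|\to 0$ for all cylinders $A,B$. The route is to couple the conditional law $Q(\,\cdot\mid X_1,\dots,X_p)$ with $Q$ along the shifted coordinates $X_{k+1},\dots,X_{k+q}$ and to feed in the mixing of the irreducible aperiodic matrices $P_N$, the consistency relations $\phi_N(\kappa_N)=\kappa_{N-1}$ serving to glue the finite-order pictures together. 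I expect the genuine obstacle to sit here: positivity of $\kappa_N$ supplies only \emph{history-dependent} lower bounds on the one-step laws, which may degenerate as $N\to\infty$, so the delicate point is to play this degeneration off against the contraction rates of the $P_N$ and extract a loss-of-memory estimate uniform enough to drive the Cesàro averages to zero.
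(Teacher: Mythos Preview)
Your construction of $\phi_N$ is essentially the paper's: both flatten $\kappa_N$ to a stochastic matrix on $S^N$, extract its unique invariant vector $\mu_N$, and normalize. Your displayed formula looks different from the paper's $\zeta_N$, but the stationary equation $\mu_N P_N=\mu_N$ collapses your sum over $a$ to $\mu_N(\tau)(r_1,\dots,r_{N-1},s)$, after which the two expressions coincide. The paper invokes Schweitzer's perturbation result for continuity of $\kappa_N\mapsto\mu_N$, while you use that $\mu_N$ solves a linear system with rational dependence on the entries; either works. Your uniqueness discussion is in fact more explicit than the paper's, which leaves uniqueness largely implicit in the uniqueness of the Perron vector.

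The genuine gap is in the ergodicity part. You set out to prove tail triviality via a coupling/contraction argument and then correctly identify the obstruction: positivity of each $\kappa_N$ gives only history-dependent one-step lower bounds that may degenerate as $N\to\infty$, and you offer no mechanism to control this. As written, that paragraph is a plan with an acknowledged hole, not a proof. The paper avoids this entirely by a much more elementary route: it uses the characterization that a stationary $Q$ is ergodic as soon as $Q(\tilde\theta^{-n}A\cap B)>0$ for some $n$ whenever $Q(A),Q(B)>0$. For cylinder events $A'=[c_1,\dots,c_p]$ and $B'=[d_1,\dots,d_q]$ this is immediate from positivity of every $\kappa_N$, since $\tilde\theta^{-(q+k)}A'\cap B'$ is again a cylinder and hence has strictly positive $Q$-mass for every $k\ge 0$; the general case follows by approximating $A,B$ by cylinders. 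No asymptotic contraction estimate is needed, and the degeneration you worry about never enters. Replacing your final paragraph by this direct argument closes the gap.
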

\begin{proof}
Let $N$ be a fixed but arbitrarily chosen positive integer, $r:=s^N$ and $\kappa_N$ a positive stochastic tensor encoding dependence of length $N$. Applying a suitable flattening $\tilde{\cdot}$, $\kappa_N$ can be thought of as a stochastic $(r\times r)$-matrix $\tilde{\kappa}_N$ governing a Markov chain on the $N$-th higher shift space $S^{[N]}$, c.f. \cite{lind1995introduction}. Now, positivity of $\kappa_N$ clearly implies positive recurrence and aperiodicity of $\tilde{\kappa}_N$. From the theory of Markov chains, c.f. \cite{freedman1983markov}, it is well known that in this case there is a unique stochastic row vector $v$ of length $r$ such that $v\tilde{\kappa}_N=v$. Thus, in essence, there is a mapping 
\begin{align*}
\kappa_N \stackrel{\gamma_N}{\longmapsto} v,
\end{align*}
which is continuous by virtue of \cite{schweitzer1968perturbation}.
Furthermore, define the continuous mapping $\zeta_N$ by
\begin{align}
v=\left(v_{t}\right)_{t\in S^{r}} \stackrel{\zeta_N}{\longmapsto} \left(\frac{v_{t}}{\sum\limits_{j\in S} v_{(j,\pi_{[1:r-1]}\left(t\right))}}\right)_{t\in S^{r}}=:\left(v_{\pi_{[2:r]}(t)}^{\pi_{[1:r-1]}(t)}\right)_{t\in S^{r}},\label{S} \tag{Stat}
\end{align}
where $\pi_{[k:l]}$ denotes the projection onto the coordinates with indexes $k\leq i\leq l$. Now, let $\phi_N:=\zeta_N\circ \gamma_N$, $\kappa_{N-1}:=\phi_N(\kappa_N)$ and extending $\kappa_{N-1}$ by an appropriate amount of zero-entries. In order to close the proof, let $A,B\in\cS$ be two measurable sets with $Q(A),Q(B)>0$, where $Q$ is as in the claim. By positivity of the parametrizing sequence $\kappa$ these sets must contain proper subsets $A'=\theta^{-k_1}[c_1]\cap \cdots \cap \theta^{-k_n}[c_n]$ and $B'=\theta^{-l_1}[d_1]\cap \cdots \cap \theta^{-l_m}[d_m]$ with positive integers $k_n,l_m,n,m$ and states $c_1,\dotsc,c_n,d_1,\dotsc,d_m$. But then, $\theta^{-(l_m+q)}A'\cap B'$ is again of the same form as $A'$ and $B'$ for any $q\in\N_0$. Thus, again by positivity of $\kappa$, it follows $Q\left(\theta^{-(l_m+q)}A'\cap B'\right)>0$ which is well known to be equivalent to ergodicity of $Q$.
\end{proof}

\begin{remark}
\begin{enumerate}[(i)]
\item The proof of \thref{thm} shows that the mapping $\phi_N$ basically amounts to the mapping which maps a decent stochastic matrix onto its Perron-projection.
	\item The construction in \eqref{S} uses stationarity in the following way. Letting $t=(t_1,\dotsc, t_r)$ with $r$ and $Q$ as above one has that 
	\begin{align*}
	p_{t_2\cdots t_r}^{t_1\cdots t_{r-1}}=\frac{v_{(t_1,\dotsc, t_r)}}{\sum\limits_{j\in S}v_{(j,t_1,\dotsc,t_{r-1}}} \Leftrightarrow 
	& \left(\sum\limits_{j\in S} Q ([j,t_1,\dotsc,t_{r-1}]) \right)p_{t_2\cdots t_r}^{t_1\cdots t_{r-1}}\\
	& =Q([t_1,\dotsc,t_{r}]),
	\end{align*}
	which holds true since it is well known that stationarity of $Q$ is equivalent to 
	\begin{align}
	\sum\limits_{j\in S} Q([j,t_1,\dotsc,t_{r-1}])= Q([t_1,\dotsc,t_{r-1}])=\sum\limits_{j\in S} Q([t_1,\dotsc,t_{r-1},j]), \label{sym} \tag{*}
	\end{align}
	for all possible cylinder sets, where the second identity holds by virtue of Kolmogorov's extension theorem. Notice that \eqref{sym} states stationarity, above defined as an invariant constraint, in form of a symmetry condition.
	\item \thref{thm} makes obvious the well known fact that the stationary (ergodic) probability measures on a Cartesian product is (weakly) nowhere-dense in the space of all (the extreme points of all) probability measures on that sequence space.
	\item Continuity of the mappings $\phi_N$ will below be used to induce a measurable structure on the parameter space.
\end{enumerate}
\end{remark}

 The following example establishes the mappings $\phi_1,\phi_2,\phi_3$ in the case of binary data.

\begin{example}
Let $S=\{0,1\}$ and some sequence $\kappa=\left(\kappa_N\right)_{N\in\N_0}$ of positive stochastic tensors be given such that $\kappa_N$ essentially equals $\kappa_3$ for all $N>3$. Thus, $Q=\bigotimes_{N\in\N_0}\kappa_N$ is a $3$-Markov measure. Then, stationarity of $Q$ implies the following set of non-linear equations.
\begin{align}
p_{s_1}p_{s_2}^{s_1}p_{s_2s_3}^{s_1s_2}p_{s_2s_3s_4}^{s_1s_2s_3} 
-p_{s_2}p_{s_3}^{s_2}p_{s_3s_4}^{s_2s_3}=0; s_1,s_2,s_3,s_4\in\{0,1\}.\label{N}\tag{N}
\end{align}
Solving \eqref{N} simultaneously for $\kappa_0,\kappa_1,\kappa_2$, under the stochasticity constraint of the tensors, gives the following solutions in terms of $\kappa_3$.
\begin{align*}
&\kappa_2(\kappa_3)
=\begin{pmatrix}
	p_{00}^{00}(\kappa_3)&p_{01}^{00}(\kappa_3)&0&0\\
	0&0&p_{10}^{01}(\kappa_3)&p_{11}^{01}(\kappa_3)\\
	p_{00}^{10}(\kappa_3)&p_{01}^{10}(\kappa_3)&0&0\\
	0&0&p_{10}^{11}(\kappa_3)&p_{11}^{11}(\kappa_3)
\end{pmatrix}\\
&=\begin{pmatrix}
	\frac{p_{000}^{100}}{p_{000}^{100}+p_{001}^{000}} & \frac{p_{001}^{000}}{p_{000}^{100}+p_{001}^{000}} &0&0\\
	0&0& \frac{p_{010}^{101}+p_{100}^{110}(p_{010}^{001}-p_{010}^{101})}{1+(p_{010}^{101}-p_{010}^{001})(p_{100}^{010}-p_{100}^{110})} & \frac{p_{011}^{101}-p_{100}^{010}(p_{010}^{001}-p_{010}^{101})}{1+(p_{010}^{101}-p_{010}^{001})(p_{100}^{010}-p_{100}^{110})} \\
	\frac{p_{100}^{110}+p_{010}^{101}(p_{100}^{010}-p_{100}^{110})}{1+(p_{010}^{101}-p_{010}^{001})(p_{100}^{010}-p_{100}^{110})} & \frac{p_{101}^{110}-p_{010}^{001}(p_{100}^{010}-p_{100}^{110})}{1+(p_{010}^{101}-p_{010}^{001})(p_{100}^{010}-p_{100}^{110})} &0&0 \\
	0&0& \frac{p_{110}^{111}}{p_{110}^{111}+p_{111}^{011}} & \frac{p_{111}^{011}}{p_{110}^{111}+p_{111}^{011}} \\
\end{pmatrix},
\end{align*}
where it remains to check that all entries of the stochastic tensor are actually probabilities. Indeed, that is true according to the following. First note that, by positivity of the stochastic tensor $\kappa_3$, all the denominators of the entries of $\kappa_2(\kappa_3)$ are strictly positive. Hence, e.g. for $p_{00}^{10}(\kappa_3)$ it holds
\begin{align*}
\frac{p_{100}^{110}+p_{010}^{101}(p_{100}^{010}-p_{100}^{110})}{1+(p_{010}^{101}-p_{010}^{001})(p_{100}^{010}-p_{100}^{110})}\leq 0& \Leftrightarrow p_{100}^{110}+p_{010}^{101}(p_{100}^{010}-p_{100}^{110}) \leq 0 \\
&\Leftrightarrow p_{100}^{110}p_{011}^{101}+p_{010}^{101}p_{100}^{010}\leq 0,
\end{align*}
which clearly contradicted the positivity of $\kappa_3$. The claim for  $p_{00}^{10}(\kappa_3)$ follows since the rows obviously sum up to $1$. The other entries follow the same reasoning. The solutions for $\kappa_1$ and $\kappa_0$ can be obtained as
\begin{align*}
\kappa_1(\kappa_2(\kappa_3))&=
\begin{pmatrix}
	p_{0}^{0}(\kappa_2(\kappa_3))&p_{1}^{0}(\kappa_2(\kappa_3))\\
	p_{0}^{1}(\kappa_2(\kappa_3))&p_{1}^{1}(\kappa_2(\kappa_3))
\end{pmatrix}\\
&=\begin{pmatrix}
	\frac{p_{00}^{10}(\kappa_3)}{p_{01}^{00}(\kappa_3)+p_{00}^{10}(\kappa_3)} & \frac{p_{01}^{00}(\kappa_3)}{p_{01}^{00}(\kappa_3)+p_{00}^{10}(\kappa_3)}\\
	\frac{p_{10}^{11}(\kappa_3)}{p_{10}^{11}(\kappa_3)+p_{11}^{01}(\kappa_3)} & \frac{p_{11}^{01}(\kappa_3)}{p_{10}^{11}(\kappa_3)+p_{01}^{11}(\kappa_3)}
\end{pmatrix}\\
&=\begin{pmatrix}
	\frac{C_1}{1+C_1}&\frac{1}{1+C_1}\\
	\frac{1}{1+C_2}&\frac{C_2}{1+C_2}
\end{pmatrix}
\end{align*}
and 
\begin{align*}
\kappa_0(\kappa_3)&=\left(p_{0}^{}(\kappa_1(\kappa_2(\kappa_3))),p_{1}^{}(\kappa_1(\kappa_2(\kappa_3)))\right)\\
&=\left(\frac{p_{0}^{1}(\kappa_2(\kappa_3))}{p_{0}^{1}(\kappa_2(\kappa_3))+p_{1}^{0}(\kappa_2(\kappa_3))}\frac{p_{1}^{0}(\kappa_2(\kappa_3))}{p_{0}^{1}(\kappa_2(\kappa_3))+p_{1}^{0}(\kappa_2(\kappa_3))}\right)\\
&=\left(\frac{1}{1+\frac{1+C_2}{1+C_1}},\frac{1}{1+\frac{1+C_1}{1+C_2}}\right),
\end{align*}
where 
\begin{align*}
&C_1=\frac{p_{000}^{100}+p_{001}^{000}}{1+\left(p_{010}^{101}-p_{010}^{001}\right)\left(p_{100}^{010}-p_{100}^{110}\right)}\times\frac{p_{100}^{110}+p_{010}^{101}\left(p_{100}^{010}-p_{100}^{110}\right)}{p_{001}^{000}}> 0\\
&C_2=\frac{p_{110}^{111}+p_{111}^{011}}{1+\left(p_{010}^{101}-p_{010}^{001}\right)\left(p_{100}^{010}-p_{100}^{110}\right)}\times\frac{p_{011}^{101}-p_{100}^{010}\left(p_{010}^{001}-p_{010}^{101}\right)}{p_{110}^{111}}>0.
\end{align*}

Notice that these unique solutions also could have been obtained by applying the machinery mentioned in above proof instead of solving \eqref{N} simultaneously.
\end{example}

The example of binary data is not only useful to illustrate how to obtain lower-dimensional kernels from higher-dimensional ones but also for motivating the subsequent definition of a parameter space of the positive shift-ergodic measures. The following pictures show the embedding of the binary shift-ergodic $1$-Markov measures in the space of all binary $1$-Markov measures.
\begin{figure}[H]
	\begin{minipage}{0.45\textwidth} 
	\includegraphics[width=\textwidth]{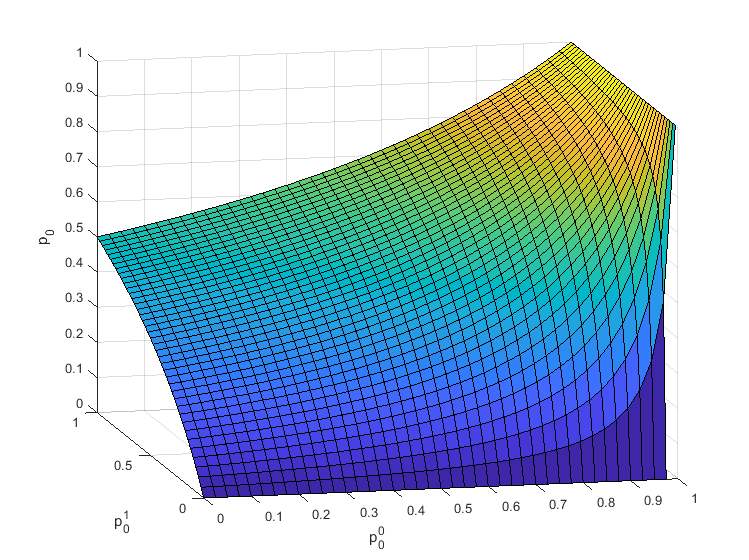}
	\end{minipage}
	\hfill
	\begin{minipage}{0.45\textwidth}
	\includegraphics[width=\textwidth]{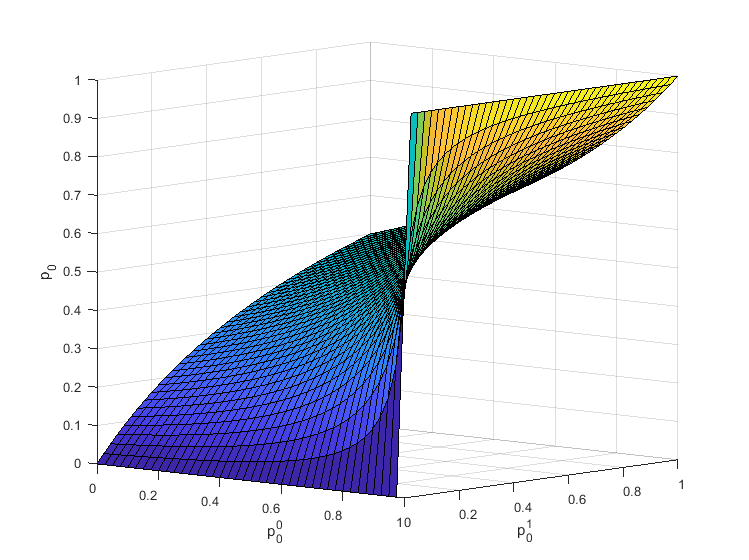} 
	\end{minipage}
\end{figure}

\begin{figure}[H]
	\begin{minipage}{0.45\textwidth} 
	\includegraphics[width=\textwidth]{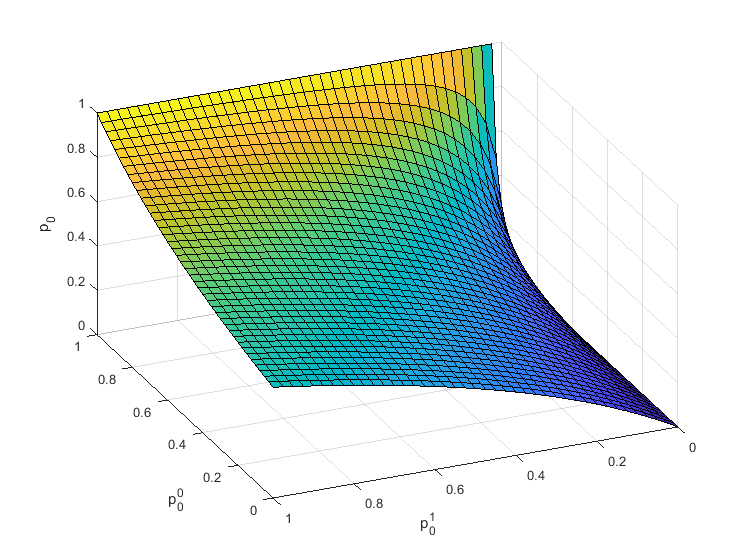}
	\end{minipage}
	\hfill
	\begin{minipage}{0.45\textwidth}
	\includegraphics[width=\textwidth]{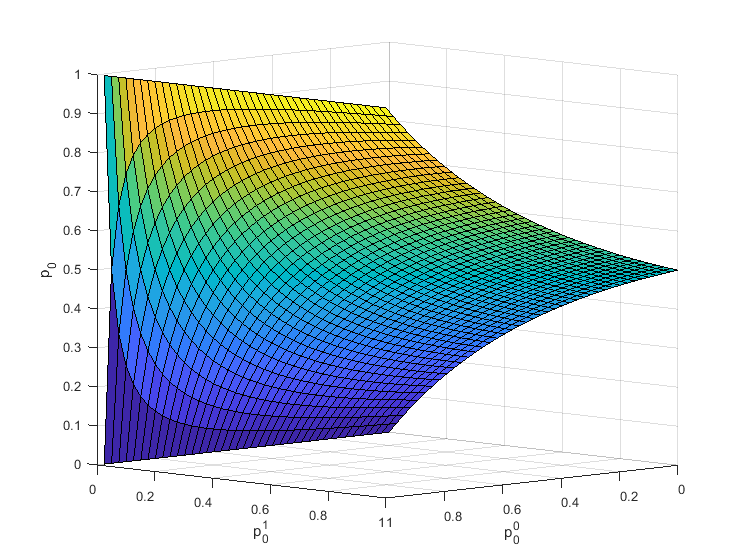} 
	\end{minipage}
	\caption{Embedding of shift-ergodic Markov measures in all Markov measures; the binary case.}
\end{figure}

Each point of the unit cube parametrizes a $1$-Markov measure $P\in\cP(\{0,1\}^{\N})$ according to the definition
\begin{align*}
P([s_1,\dotsc,s_m]):=p_{s_1}p_{s_2}^{s_1}\cdots p_{s_m}^{s_{m-1}},
\end{align*}
for all cylinder sets $[s_1,\dotsc,s_m]$. Thereby $\kappa_0=(p_0,p_1)$ denotes the initial stochastic tensor and $\kappa_2=
\begin{pmatrix}p_{0}^{0}&p_{1}^{0}\\p_{0}^{1}&p_{1}^{1}\end{pmatrix}$ the tensor of next higher dimension.
  However, only the points on the surface depict the shift-ergodic $1$-Markov measures reflecting the non-linear mapping $\cK_1 \stackrel{\phi_1}{\longmapsto}\cK_0$.\\
	
In addition, the Bernoulli measures, can be embedded into the picture by taking
\begin{align*}
\begin{pmatrix}p_{0}^{0}&p_{1}^{0}\\p_{0}^{1}&p_{1}^{1}\end{pmatrix}
:=\begin{pmatrix}p_{0}^{}&p_{1}^{}\\p_{0}^{}&p_{1}^{}\end{pmatrix}.
\end{align*}

\begin{figure}[H]
	\begin{minipage}{0.45\textwidth} 
	\includegraphics[width=\textwidth]{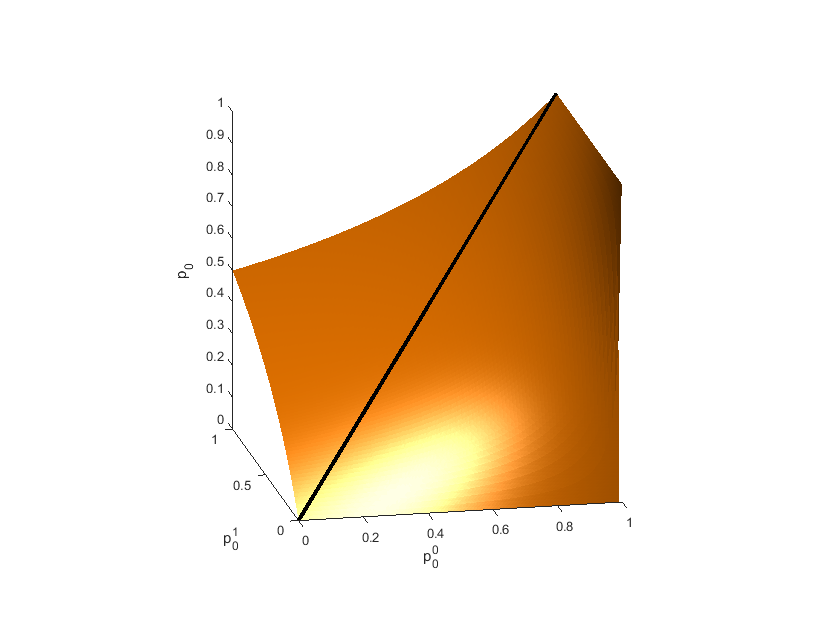}
	\end{minipage}
	\hfill
	\begin{minipage}{0.45\textwidth}
	\includegraphics[width=\textwidth]{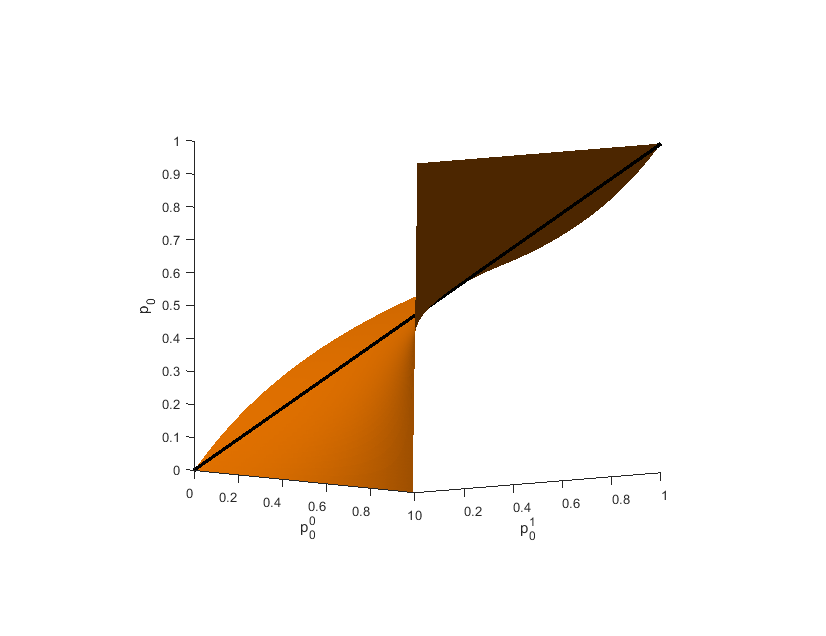}
	\end{minipage}
\end{figure}

\begin{figure}[H]
	\begin{minipage}{0.45\textwidth} 
	\includegraphics[width=\textwidth]{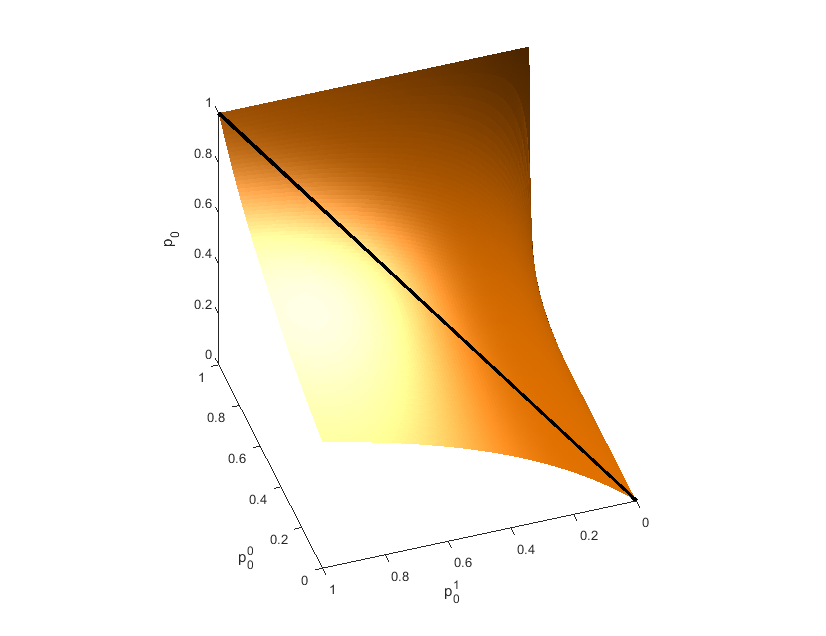}
	\end{minipage}
	\hfill
	\begin{minipage}{0.45\textwidth}
	\includegraphics[width=\textwidth]{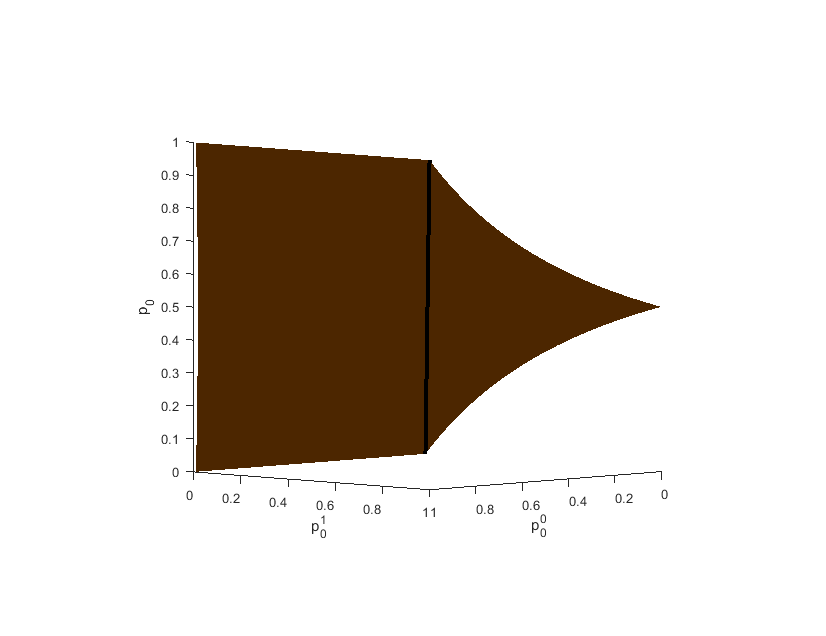}
	\label{fig:2} 
	\end{minipage}
	\caption{Embedding of i.i.d.-measures in shift-ergodic Markov measures; the binary case.}
\end{figure}

In a certain way, this embedding will now be generalized to infinite dimensions. However, this cannot be done in a neat graphical way as above since in higher dimensions or for state spaces with more than two elements, respectively, the involved simplices become too large. Thus, in the following the necessary construction is given in an algebraic manner.

Therefor, let $\cK=\prod_{N\in\N_0} \cK_N$ the space of sequences of stochastic tensors and equipped with the product topology and denote by $\cK(+)\subset \cK$ the dense subset of sequences of positive stochastic tensors. Now, out of the family of continuous mappings $(\phi_N)_{N\in\N}$  appearing in \thref{thm}, build another family of continuous mappings $\psi=\left(\psi_{M,N}:\cK_N(+)\rightarrow \cK_M(+)\right)_{M\leq N;M,N\in\N_0}$ by the construction
\begin{align*}
\psi_{M,N}:=\begin{cases} \phi_{M+1}\circ\cdots\circ\phi_N, & \textsf{if } M\neq N\\ 
                          \textsf{id}_{\cK_M(+)}, &  \textsf{otherwise}
\end{cases}.
\end{align*}
For any sequence $a=(a_n)_{n\in\N}$ denote by $\pi=\left(\pi_n:a\mapsto a_n\right)_{n\in\N_0}$ the family of coordinate projections. The next definition establishes the notion of a parameter space for the space of positive shift-ergodic measures.

\begin{definition}
Let $\cK(+)$ and $\psi$ be as above and define the parameter space $\kP$ to be the inverse limit alongside the families $\left(\cK_N\right)_{N\in\N_0}$ and $\left(\psi_{M,N}\right)_{M,N\in\N_0}$. That is
\begin{align*}
\kP=\left\{\kappa\in\cK(+): \pi_M(\kappa)=\psi_{M,N}\circ\pi_N(\kappa); M,N\in \N_0, M\leq N\right\}.
\end{align*}
\end{definition}

Put another way, the parameter space consists of all sequences of positive kernels increasing by their degree of dependence whose coordinates stick together through the family of mappings $\phi$ obtained in \thref{thm} meaning that the diagram 
\begin{center}
\begin{tikzcd}[column sep=4em]
 & \arrow{ld}{\psi_{N}}\kP \arrow{dr}{\psi_M} \\
 \cK_N\arrow{rr}{\psi_{M,N}} && \cK_M
\end{tikzcd}
\end{center}
commutes, where $\psi_N$ denotes the restriction of $\pi_N$ to $\kP$.
Of course, all ergodic probability measures carrying only finite degree of dependence can be encountered in $\kP$ by a similar embedding argument as Figure 2 implies. In particular these are measures which are obtained by solving a finite system of non-linear equations similar to \eqref{N}. However, $\kP$ parametrizes even more. For instance all hidden Markov measures which are well known to possess infinite degree of dependence in general, see e.g. \cite{blackwell1957entropy,marcus2011entropy}. This, in a certain way,
is in correspondence to an idea presented in \cite{ornstein1973application} which, loosely speaking, describes the truncation of ergodic processes into multi-step Markov processes in terms of their distances measured by entropy. However, the present work gives rise to think of such truncations as suitable projections from a space of ``infinite-step Markov processes''.

Since the  mappings $\psi_{M,N}$ are continuous there is a natural notion of a topology on $\kP$ as the coarsest topology which makes all the mappings $\psi_N$ continuous, see e.g. \cite{bourbaki1968theory,bourbaki1969general} for further details. Taking $\kB$ as the Borel $\sigma$-field induced by this topology makes $\left(\kP,\kB\right)$ a measurable space. Then, basically any probability measure $\Pi\in\cP\left(\cP^e(S^{\N})\right)$ makes $\left(\kP,\kB,\Pi\right)$ a Bayesian model for stationary data taking values in some finite state space. Thereby, the positivity of the parametrized subspace of the ergodic measures amounts to a kind of non-informativity about the possible state-to-state transitions of the process. Basically any information about the inability of some jump into a certain state after having visited a finite string of states would lead the statistical modeling to concentrate on an appropriate subspace of $\kP$. However, positivity usually preserves one from underestimating certain transition probabilities especially when the size of the data is small.


\section{Inference}

In the present section, an explicit model for the mixing measure $\Pi\in\cP(\kP)$ will be obtained. Thereby, the focus shall be on the possibility of a reasonable update to the posterior $\Pi(\cdot|\Xn)$, where $\Xn=(X_1,\dotsc,X_n)$ denotes the observed data. Since the data are assumed to be generally conditionally ergodic, there is uncertainty in any of the summarizing statistics which might be seen as the transition counts of any finite degree in analogy to \eqref{E}. Hence, the update must incorporate the possibility of learning from the data for the degree of dependence of the true data-generating probability measure which will throughout be assumed to be finite. However, it will not be assumed that it is of parametric form in the sense that it is a.s.-$\Pi$ bounded from above as it is for instance in \cite{strelioff2007inferring}. Also it is omitted to resort on purely computational methods as in \cite{bacallado2015finetti} who infer data with respect to their ``closeness to exchangeability''. Therefor, the authors propose the use of a certain prior being motivated from the Central Limit Theorem and exploit Markov Chain Monte Carlo methods to sample from the posterior. This posterior makes then a statement about the order of dependence of the data in terms of its concentration on the respective subspace of the considered parameter space. One might keep in mind the visualization of the embedding of the Bernoulli measures into the space of Markov measures as in above pictures. There the posterior would concentrate around the space diagonal of the parameter cube. However, it is unclear to the author how these methods could be extended to the unbounded case.

The approach considered here is different to the just mentioned ones since it relies on an idea which mimics the update mechanism of the celebrated Dirichlet process. This will enable one to deal with the unbounded case, i.e. where the dependence of the true probability measure generating the data can be of any order. The rough idea is to extend the usual Bayesian sampling scheme by one more stage, namely the drawing of the degree of dependence $N\in\N_0$ from a distribution $\nu\in\cP(\N_0)$. Then, given $N$, a positive stochastic tensor $\kappa_N$ is sampled from a specific distribution $\mu\in \cP(\cK_N)$. This distribution will be taken to reflect another non-informativity, namely that one cannot learn for some segment of the random tensor by observing a transition associated to another segment. This is reflected by sampling the segments of the tensor, which might be thought of as the rows of its flattening analogously as in Example 1, independently from each other. Notice that the ability of any cross learning implicitly required additional prior information on the Bayesian model. Each segment itself is sampled from a Dirichlet distribution $\Dir(\alpha_{t^{(N)}})$ with parameter vector $\alpha_{t^{(N)}}>0$, $t^{(N)}\in S^N$, such that $\mu=\bigotimes_{t^{(N)}\in S^N} \Dir(\alpha_{t^{(N)}})$. By virtue of \thref{thm}, it suffices to sample the stochastic tensor of highest order $\kappa_N$ in order to determine a positive shift-ergodic probability measure $Q=\bigotimes_{M\in\N_0}\psi_{M,N}(\kappa_N)$, where $\psi_{M,N}(\kappa_N)$ amounts to the element $\tilde{\kappa}_M$ of the embedding of $\cK_N$ into $\cK_M$ such that $\phi_{M,N}(\tilde{\kappa}_M)=\kappa_N$ whenever $M>N$. Then, in a last step data is sampled from $Q$ as defined in \thref{thm}. The subsequent sampling scheme \eqref{Sample} summarizes above. Let $\Pi$ be given according to the decomposition
\begin{align*}
\Pi=\int\limits_{\N_0} \left[\bigotimes_{t^{(N)}\in S^N}\Dir(\alpha_{t^{(N)}})\right]\nu(dN)=\sum\limits_{N\in\N_0} \left[\bigotimes_{t^{(N)}\in S^N}\Dir(\alpha_{t^{(N)}})\right]\nu(N)
\end{align*}
such that data is sampled as

\begin{align*}\label{Sample}
&\textsf{(1.) } N\sim \nu\\
&\textsf{(2.) } \kappa_N\big\vert \left[N,(\alpha_{t^{(N)}})_{t^{(N)} \in S^N}\right] \sim \bigotimes_{t^{(N)}\in S^N}\Dir(\alpha_{t^{(N)}})\\ \tag{Smpl}
&\textsf{(3.) }X_{[1:\infty]} \big\vert \left[N,(\alpha_{t^{(N)}})_{t^{(N)}\in S^N},\kappa_N\right]\sim \bigotimes_{N\in\N_0} \kappa_N.
\end{align*}

As the way the data are sampled, the update to the posterior is split in two separate stages, too. Thereby, updating the laws of the stochastic tensors is fairly straight forward. That is basically due to the assumption of independence among the segments of the tensors, which corresponds to the assumption that transition counts of appropriate order are predictively sufficient, c.f. \cite{fortini2014predictive}. Thus, given the order of dependence $N$, the update of the law of $\kappa_N$ is accomplished by separately updating the Dirichlet distributions associated to the segments of the tensor by the respective transition counts. Since the Dirichlet distribution is a conjugate prior for the multinomial likelihoods which emerge from these considerations, the updated law of $\kappa_N$ can be expressed as
\begin{align*}
\kappa_N\big\vert \left[N,(\alpha_{t^{(N)}})_{t^{(N)} \in S^N},\Xn\right] \sim \bigotimes_{t^{(N)} \in S^{N+1}}\Dir\left(\alpha_{t^{(N)}}+\sum\limits_{k=1}^{n-N}\delta_{X_{[k,k+N]}}\left(\{t^{(N)}\}\right)\right).
\end{align*}

In contrast, the update of the distribution of the degree of dependence is not as straight forward as that of the laws of the tensors. That is due to two main reasons. Firstly, one needs to infer the structure of the data from the observations. Secondly, the Bayes theorem implies a procedure which cannot be accomplished by a finite algorithm. For the latter, note that the Bayes theorem would be obtained as
\begin{align*}
\bP\left(N|X_{[1:n]}\right)=\frac{\bP\left(X_{[1:n]}|N\right) \bP(N)}{\bP\left(X_{[1:n]}\right)}
=\frac{\bP\left(X_{[1:n]}|N\right) \nu(N)}{\bP\left(X_{[1:n]}\right)}, \label{B}\tag{B}
\end{align*}
where 
\begin{align*}
\bP(X_{[1:n]}|N)&=\sum\limits_{\kappa_N} \bP(X_{[1:n]}|N,\kappa_N)\bP(\kappa_N|N)\\
&=\sum\limits_{\kappa_N\in \cK_N} \bP(X_{[1:n]}|N,\kappa_N)\left[\bigotimes_{t^{(N)}\in S^N}\Dir(\alpha_{t^{(N)}})\right].
\end{align*}
Hence, the denominator in \eqref{B} is given as
\begin{align*}
\sum\limits_{N\in\N_0}\sum\limits_{\kappa_N\in \cK_N} \bP\left(X_{[1:n]}|N,\kappa_N\right)\left[\bigotimes_{t^{(N)}\in S^N}\Dir\left(\alpha_{t^{(N)}}\right)\right] \nu(N),
\end{align*}
requiring the calculation of the likelihood $\bP(X_{[1:n]}|N,\kappa_N)$ for any $N\in\N_0$ and $\kappa_N\in \cK_N$. From a theoretical point of view this can be done using the family of mappings $\psi$. However, from a practical perspective this seems hopeless.

Hence, another more direct and more pragmatic method is proposed here with particular focus on posterior consistency. In a specific manner, this will consist in mimicking the update mechanism of the Dirichlet process. More precisely this means that the atoms of $\nu$ are updated by weights obtained from the raw data such that the prior guess vanishes by increasing the size of the data. However, there should be an additional mechanism which decreases the mass of atoms which correspond to degrees of dependence that are too large to be inferred from a relatively small size of observations. Moreover, this mechanism should be able to increase such masses when the size of the observations increases and the data give evidence. 

A second difference to the update mechanism of the Dirichlet process is that the additional weights can certainly not stem from an assumption of partially exchangeability of any degree and thus cannot be expressed in terms of the empirical measure. Instead, these updating weights shall be obtained as a kind of empirical measurements of the distances between the tensor spaces in terms of symmetry which are indicated by the observed data. To make this precise, let for $n,N\in\N_0$, $N<n$ and a string $s^{(N)}\in S^{N+1}$
\begin{align*}
\ke_n^{(N)}\left(X_{[1:n]}; s^{(N)}\right)=\frac{1}{n-N} \sum\limits_{k=1}^{n-N} \delta_{X_{[k:k+N]}}\left(s^{(N)}\right)
\end{align*}
be the empirical measure consisting of a properly normalized version of the order-$N$ transition counts $t_N=\left(t_N\left(\Xn; s^{(N)}\right)\right)_{s^{(N)}\in S^{N+1}}$. 
The following thought experiment is given in order to clarify the basic idea of what follows. If e.g. the stochastic process $X$ is known to be sampled from a Markov measure of order one then all the stochastic tensors of order greater than one fulfill a certain symmetry condition representing the truncation of the dependence of the process. That is it holds 
\begin{align*}
p_{s_2\cdots s_{N-1} s_{N}s_{N+1}}^{s_1 s_2\cdots s_{N-1} s_N}=p_{t_2\cdots t_{N-1} s_{N}s_{N+1}}^{t_1 t_2\cdots t_{N-1}s_N},
\end{align*}
for all $\left(s_1,\dotsc,s_{N+1}\right)\in S^{N+1}$, $\left(t_1,\dotsc, t_{N-1}\right)\in S^{N-1}$ and for all $N>1$. When an outcome of process $\Xn$ is observed this means that one will observe nearly as many jumps $\left(s_1,\dotsc,s_{N}\right)\rightarrow \left(s_2,\dotsc,s_{N+1}\right)$ as jumps $\left(t_1,\dotsc,t_{N-1},s_N\right)\rightarrow \left(t_2,\dotsc, t_{N-1},s_N,s_{N+1}\right)$, at least if $n>>N$ is large enough. The argument is the same for orders of dependency $N>1$. From a statistical viewpoint, this argumentation is now reversed and exploited in order to find appropriate update weights for the posterior. Therefor, a suitable estimates for all the transition probabilities is used. These estimates are given for $n>N$ as
\begin{align*}
\hat{p}_{s_2\cdots s_{N-1} s_{N}s_{N+1}}^{s_1 s_2\cdots s_{N-1} s_N}\left(\Xn\right)&=\frac{t_N\left(\Xn; s_1,\dotsc, s_{N+1}\right)}{\sum_{s_N\in S}t_N\left(\Xn; s_1,\dotsc, s_{N+1}\right)}\\
&=\frac{\sum_{k=1}^{n-N}\delta_{X_{[k:k+N]}}\left(s_1,\dotsc, s_{N+1}\right)}{\sum_{k=1}^{n-N}\delta_{X_{[k:k+N-1]}}\left(s_1,\dotsc, s_{N}\right)},
\end{align*}
which is in full analogy of the order one Markov case for which \cite{diaconis1980finetti} argue that $\left(\frac{t_1\left(\Xn; i,j\right)}{\sum_{j\in S}t_1\left(\Xn; i,j\right)}\right)_{i,j\in S}$ converges to a stochastic $(s \times s)$-matrix for almost all sequences of data $X_{[1:\infty]}$ as $n$ tends to infinity. See also \cite{fortini2002mixtures} for related results taking into account a decent extension of the state space. The updating weights will be obtained regarding the differences $\hat{d}_n^{(N)}\left[i,j,s^{(N-1)}\right]$ which are defined as 
\begin{align*}
\bigg\vert \sum_{k=1}^{n-N} & \delta_{X_{[k:k+N-1]}}\left(i,s_1,\dotsc, s_{N-1}\right)\times  \sum_{k=1}^{n-N}  \delta_{X_{[k:k+N]}} \left(j,s_1,\dotsc, s_{N} \right)\\ 
& -\sum_{k=1}^{n-N}\delta_{X_{[k:k+N-1]}}\left(j,s_1,\dotsc, s_{N-1}\right)\times \sum_{k=1}^{n-N}\delta_{X_{[k:k+N]}}\left(i,s_1,\dotsc,s_N \right)\bigg\vert
\end{align*}
for $i,j\in S$ in order to detect dependencies of order $N$. Considering again above thought experiment $\hat{d}_n^{(N)}\left[i,j,s^{(N-1)}\right]$ is expected to decay for all $N>1$ and all strings $s^{(N-1)}$ if $n$ increases, while $\hat{d}_n^{(1)}\left[i,j,s\right]$ is expected to go to infinity for all $s\in S$. Those differences will serve as the building blocks of the update. However, since, using Landaus notation, it holds $\hat{d}_n^{(N)}=\mathcal{O}(n^2)$ for all $N\geq 1$ it is necessary to modify these differences in order to achieve posterior consistency of the update scheme to be presented. Roughly, that is because without such a modification the update scheme won't pin point the tensor space of the highest order as the true one because it treated the lower ones, which do not possess symmetry neither, similarly in an asymptotic manner. The modification of $\hat{d}_n^{(N)}$ will be given as $d_n^{(N)}$ defined through
\begin{align*}
d_n^{(N)}\left[i,j,s^{(N-1)}\right]&:=\bigg\vert  \sum_{k=1}^{n-N}\delta_{X_{[k:k+N-1]}}\left(i,s_1,\dotsc, s_{N-1}\right)  \times \sum_{k=1}^{n-N}  \delta_{X_{[k:k+N]}} \left(j,s_1,\dotsc, s_{N} \right)\\ & -\sum_{k=1}^{n-N}\delta_{X_{[k:k+N-1]}}\left(j,s_1,\dotsc, s_{N-1}\right)\times \sum_{k=1}^{n-N}\delta_{X_{[k:k+N]}}\left(i,s_1,\dotsc,s_N \right)\bigg\vert^N
\end{align*} 

In order to detect the entire symmetry in the stochastic tensors above differences are accumulated in form of $D_n^{(N)}$ defined by
\begin{align*}
D_n^{(N)}=\sum\limits_{\substack{i,j\in S \\ s^{(N-1)} \in S^N }} d_n^{(N)}\left[i,j,s^{(N-1)}\right].
\end{align*}

Now, the update scheme is introduced as follows. The prior $\nu$ reflecting knowledge about the order of dependence is chosen as $\nu(\cdot)=\frac{\beta(\cdot)}{\beta(\N_0)}$, where $\beta$ is a finite measure supported by the non-negative integers. Then define the updated version of the prior after having seen data $\Xn$ by
\begin{align}
\nu\left(N|\Xn\right):= \frac{\beta(N)+D_n^{(N)}}{\beta(\N_0)+\sum_{k\in \N_0}D_n^{(k)} }, \label{U}\tag{U}
\end{align}
where $D_n^{(0)}:=n$ and $D_n^{(N)}:=0$ for $N\geq n$. Then we have the following lemma considering posterior consistency.

\begin{lemma}\thlabel{consistency}
Let $\nu(\cdot)=\frac{\beta(\cdot)}{\beta(\N_0)}$ be a prior for the order of dependence of the data $X$, where $\beta$ is a finite measure on the non-negative integers. Moreover let the true order of dependence of $X$ be $N_0$. Then it holds that 
\begin{align*}
& \nu\left(N_0\vert X_{[1:n]}\right) \stackrel{n \rightarrow \infty}{\longrightarrow} 1,\\
& \nu\left(N\vert X_{[1:n]}\right) \stackrel{n \rightarrow \infty}{\longrightarrow} 1, \textit{\qquad} \forall N\neq N_0,
\end{align*}
where $\nu\left(\cdot | X_{[1:n]}\right)$ is defined as in \eqref{U}
\end{lemma}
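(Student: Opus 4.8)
\emph{Proof idea.} The plan is to determine, for $\cL[X]$-almost every realization, the exact growth rate in $n$ of each accumulated difference $D_n^{(N)}$, and then to read off the limits in \eqref{U} by comparing these rates. Write $N_0$ for the true (finite) order of dependence of $X$; by \thref{thm} the generating measure is a positive shift-ergodic probability of memory $N_0$, so the $N_0$-block process of $X$ is an irreducible aperiodic Markov chain on a finite set, hence uniformly ergodic and exponentially mixing. (The second display in the statement is of course meant to read $\nu(N\mid\Xn)\to 0$ for $N\ne N_0$.) The first ingredient is the elementary consequence of the ergodic theorem that every window count $\sum_{k=1}^{n-N}\delta_{X_{[k:k+N]}}(\{t\})$ equals $\mu(t)\,n+o(n)$ almost surely, where $\mu(t)>0$ is the stationary mass of the cylinder $[t]$ --- positivity of $\mu(t)$ using positivity of the kernels --- which in particular recovers the bound $\hat d_n^{(N)}=\mathcal{O}(n^2)$ quoted before the lemma.

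Next I would treat the true order and the low orders. For $N=N_0$, expanding each window count as $\mu(\cdot)\,n+o(n)$ in the inner difference of $d_n^{(N_0)}[i,j,s^{(N_0-1)}]$ --- the quantity raised to the $N_0$-th power, which has the shape $a_i b_j-a_j b_i$ with $a_i,a_j,b_i,b_j$ suitable window counts --- one sees that its $n^2$-coefficient is a positive multiple of $p(s_{N_0}\mid j,s_1,\dots,s_{N_0-1})-p(s_{N_0}\mid i,s_1,\dots,s_{N_0-1})$. Because $X$ has memory \emph{exactly} $N_0$, this is non-zero for at least one choice of indices, so $\hat d_n^{(N_0)}\asymp n^2$, whence $d_n^{(N_0)}\asymp n^{2N_0}$ and $D_n^{(N_0)}\asymp n^{2N_0}$. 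For $1\le N<N_0$ the crude bound $\hat d_n^{(N)}=\mathcal{O}(n^2)$ already gives $D_n^{(N)}=\mathcal{O}(n^{2N})=o(n^{2N_0})$, and by convention $D_n^{(0)}=n=o(n^{2N_0})$.

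The high orders are the crux. For $N>N_0$ one has $N-1\ge N_0$, so the conditional law of the next symbol no longer depends on the first coordinate of the context and the $n^2$-coefficient of $a_i b_j-a_j b_i$ cancels; writing $a_i b_j-a_j b_i=a_i r_j-a_j r_i$ with $r_i$ a centred partial sum of a bounded functional of the finite-memory chain, the law of the iterated logarithm for uniformly ergodic chains yields $r_i=\mathcal{O}(\sqrt{n\log\log n})$ a.s., hence $\hat d_n^{(N)}=\mathcal{O}(n^{3/2}\sqrt{\log\log n})$ and $d_n^{(N)}=\mathcal{O}(n^{3N/2}(\log\log n)^{N/2})$. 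Summing over the $s^{N+2}$ summands of $D_n^{(N)}$, and then over $N>N_0$ --- the convention $D_n^{(k)}=0$ for $k\ge n$ truncates the sum, and the geometric decay of the cylinder probabilities makes the tail in $N$ summable at each fixed $n$ --- one would obtain $\sum_{N>N_0}D_n^{(N)}=o(n^{2N_0})$. Combining the three regimes, $\sum_{k\ne N_0}D_n^{(k)}=o(D_n^{(N_0)})$, and since $\beta$ is finite, \eqref{U} then forces $\nu(N_0\mid\Xn)\to 1$ and $\nu(N\mid\Xn)\to 0$ for every $N\ne N_0$.

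The main obstacle is the high-order step: one must check, \emph{uniformly} in $N>N_0$, that raising the purely stochastic $o(n^2)$ fluctuation term to the $N$-th power keeps $D_n^{(N)}$ strictly below the $n^{2N_0}$ coming from the true order, the competitors $N=N_0+1,N_0+2,\dots$ being the most delicate. This requires genuine second-order control --- an LIL, or at least a Marcinkiewicz--Zygmund-type estimate --- of the transition-count fluctuations, together with the exponential decay of cylinder masses, in contrast to the plain ergodic theorem that suffices everywhere else.
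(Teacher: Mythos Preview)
Your strategy---identify the growth rate of each $D_n^{(N)}$ and show that $D_n^{(N_0)}$ dominates the denominator in \eqref{U}---is exactly the paper's. The paper's proof is far terser: it simply \emph{asserts} that $D_n^{(N)}\in o(n^{2N_0})$ for $N>N_0$, and then normalizes by $n^{2N_0}$ for the low orders, using that $D_n^{(N_0)}/n^{2N_0}$ has a non-zero limit. On the regime $N\le N_0$ you and the paper coincide, with your write-up being the more careful one.

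Where you go beyond the paper is the attempt to \emph{justify} the high-order bound via the LIL, and here your own suspicion in the last paragraph is well founded. You are right that the $n^2$-coefficient of $\hat d_n^{(N)}$ cancels for $N>N_0$; writing $a_ib_j-a_jb_i=a_i(b_j-p\,a_j)-a_j(b_i-p\,a_i)$ with the common transition probability $p$ shows the residual is genuinely of order $n\cdot\sqrt{n}$ in distribution, so your bound $\hat d_n^{(N)}=O\bigl(n^{3/2}\sqrt{\log\log n}\bigr)$ is essentially sharp. But then $d_n^{(N)}$ is of order $n^{3N/2}$, and for $N=N_0+1$ this already dominates $n^{2N_0}$ whenever $3(N_0{+}1)/2\ge 2N_0$, i.e.\ whenever $N_0\le 3$. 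Concretely, for a true order-$1$ chain one gets $D_n^{(1)}\asymp n^{2}$ while $D_n^{(2)}$ is of order $n^{3}$, so the update \eqref{U} would drive mass toward $N=2$, not $N=1$. The geometric decay of cylinder masses you invoke contributes only a factor exponential in $N$, which cannot offset the polynomial-in-$n$ blow-up for any fixed $N>N_0$.

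So the ``main obstacle'' you flag is not a technicality but a genuine gap: the paper's proof leaves the high-order estimate as a bare assertion, and the LIL route you propose cannot close it for small $N_0$. Either the exponent $N$ in the definition of $d_n^{(N)}$ has to be changed, or a further, non-obvious cancellation in $a_i(b_j-p\,a_j)-a_j(b_i-p\,a_i)$ has to be exhibited; neither is supplied in the paper or in your sketch.
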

\begin{proof}
Since $D_n^{(N)} \in o\left(n^{2N_0}\right)$, the posterior weight $D_n^{(N)}$ vanishes in the limit for all $N> N_0$, while $D_n^{(N)}$ tends to infinity for all $N \geq N_0$. Hence $\nu\left(N\vert X_{[1:n]}\right)$ decays for $N > N_0$. Now, considering $N<N_0$ it follows for large sample sizes
\begin{align*}
\nu\left(N\vert X_{[1:n]}\right) \propto 
&\frac{\beta(N)+D_n^{(N)}}{\beta(\N_0)+D_n^{(1)}+ \dotsc +D_n^{(N_0-1)} + D_n^{(N_0)}}\\
& =\frac{\frac{1}{n^{2N_0}}\left[\beta(N)+D_n^{(N)}\right]}
{\frac{\beta(\N_0)}{n^{2N_0}}+\frac{1}{n^{2(N_0-1)}}\frac{D_n^{(1)}}{n^2}+ \dotsc +\frac{1}{n^2}\frac{D_n^{(N_0-1)}}{n^{2(N_0-1)}} + \frac{D_n^{(N_0)}}{n^{2N_0}} } \stackrel{n \rightarrow \infty}{\longrightarrow} 0.
\end{align*}
since $\frac{D_n^{(N_0)}}{n^{2N_0}}$ attains a non-zero limit. Essentially the same reasoning shows that 
\begin{align*}
\nu\left(N_0\vert X_{[1:n]}\right)\stackrel{n\rightarrow \infty}{\longrightarrow} 1
\end{align*}
such that the lemma is proven.
\end{proof}

\thref{consistency} along with well known results about posterior consistency of Dirichlet priors do imply the following result.

\begin{corollary}
Let $\nu\left(\cdot\right)=\frac{\beta(\cdot)}{\beta(\N_0)}$ be a prior for the order of dependence, where $\beta$ is a finite measure on the non-negative integers. Moreover, given the order of dependence, let the rows of the flattened stochastic tensor representing this order be sampled independently according to Dirichlet priors $\Dir\left(\alpha_{t^{(N)}}\right)$ with. Then, for almost all sequences of data $X_{[1:\infty]}$, the sequence of posterior measures defined by 
\begin{align*}
\Pi\left(\cdot\vert X_{[1:n]}\right)= \sum\limits_{N\in \N_0} \left[\bigotimes_{t^{(N)}\in S^{N+1}} \Dir \left(\alpha_{t^{(N)}}\big\vert \Xn\right)\right] \nu\left(N\vert \Xn\right)
\end{align*}
centers around the true data generating measure which samples data according to scheme \eqref{Sample}.
\end{corollary}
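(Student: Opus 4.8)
The plan is to split the claimed concentration into two essentially independent parts --- the concentration of the mixing weights $\nu(\cdot\vert\Xn)$, already supplied by \thref{consistency}, and the concentration of the Dirichlet posterior of the top-order tensor, which is classical --- and then to fuse them via the continuity of the parametrization of \thref{thm}. Concretely, fix an arbitrary weak neighbourhood $U$ of the true data-generating measure $Q_0=\bigotimes_{M\in\N_0}\psi_{M,N_0}(\kappa^0_{N_0})$, where $\kappa^0_{N_0}\in\cK_{N_0}$ is the order-$N_0$ tensor that generated the data and which is almost surely positive, being a draw from the interior of the Dirichlet simplices. Write $\mu_{N,n}$ for the push-forward to $\cP^e(S^{\N})$ of $\bigotimes_{t^{(N)}}\Dir(\alpha_{t^{(N)}}\vert\Xn)$ under the embedding $\kappa_N\mapsto\bigotimes_{M\in\N_0}\psi_{M,N}(\kappa_N)$. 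Since $\Pi(\cdot\vert\Xn)=\sum_{N}\nu(N\vert\Xn)\,\mu_{N,n}$ is a convex combination, one has immediately $\Pi(U\vert\Xn)\geq\nu(N_0\vert\Xn)\,\mu_{N_0,n}(U)$, so it suffices to show that each factor on the right converges to $1$ for $\cL[X]$-almost every $X_{[1:\infty]}$; the first factor is precisely the content of \thref{consistency}.

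For the second factor I would argue as follows. Because the data stem from a positive tensor, $Q_0$ is shift-ergodic by \thref{thm}, so Birkhoff's ergodic theorem applies to each of the finitely many bounded functionals $x\mapsto\delta_{x_{[1:k+N_0]}}(t^{(N_0)})$; hence the order-$N_0$ empirical transition frequencies $\hat p^{\,t^{(N_0)}}_{\cdot}(\Xn)$ converge almost surely to the rows $p^{\,t^{(N_0)}}_{\cdot}$ of $\kappa^0_{N_0}$ --- the $N_0$-th order analogue of the convergence exploited in \cite{diaconis1980finetti,fortini2002mixtures}. In particular every posterior concentration vector $\alpha_{t^{(N_0)}}+\sum_{k}\delta_{X_{[k:k+N_0]}}(\{t^{(N_0)}\})$ has total mass tending to infinity while its normalisation tends to $p^{\,t^{(N_0)}}_{\cdot}$; combined with the elementary fact that a Dirichlet law of total mass $m$ has coordinate variances of order $m^{-1}$, this yields $\bigotimes_{t^{(N_0)}}\Dir(\alpha_{t^{(N_0)}}\vert\Xn)\Rightarrow\delta_{\kappa^0_{N_0}}$ weakly and almost surely. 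Passing this through the continuous mapping theorem applied to the embedding above gives $\mu_{N_0,n}\Rightarrow\delta_{Q_0}$, whence $\mu_{N_0,n}(U)\to 1$ by the portmanteau theorem, $U$ being open.

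Assembling the two estimates, $\liminf_n\Pi(U\vert\Xn)\geq\liminf_n\nu(N_0\vert\Xn)\,\mu_{N_0,n}(U)=1$ almost surely, and since $U$ was an arbitrary weak neighbourhood of $Q_0$ this is exactly the asserted concentration of the sequence of posteriors around the true data-generating measure; note that the mixture components with $N\neq N_0$ never need to be controlled directly, their weights being squeezed to $0$ by \thref{consistency}. The step I expect to be the main obstacle is verifying that $\kappa_N\mapsto\bigotimes_{M}\psi_{M,N}(\kappa_N)$ is genuinely continuous as a map into $\cP^e(S^{\N})$ equipped with the weak topology: this draws on the continuity of the finitely many $\psi_{M,N}$ with $M\le N$ from \thref{thm}, the zero-padded embedding of $\cK_N$ into $\cK_M$ for $M>N$, and the continuity of the Ionescu-Tulcea product in its factors, checked on cylinder sets --- and it is this gluing, rather than the ergodic theorem or the Dirichlet concentration (both standard), that carries the weight of the argument.
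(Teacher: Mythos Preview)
Your proposal is correct and is precisely the argument the paper has in mind: the paper gives no formal proof of the corollary at all, merely stating that it is implied by \thref{consistency} together with ``well known results about posterior consistency of Dirichlet priors''. Your write-up is a faithful and careful expansion of that one-line sketch, including the lower bound $\Pi(U\vert\Xn)\geq\nu(N_0\vert\Xn)\,\mu_{N_0,n}(U)$ and the continuous-mapping step, the latter being a detail the paper leaves entirely implicit.
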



\section{Remarks}

The wish of having a better understanding about the foundations of Bayesian statistics for general stationary data arose from thinking from a Bayesian perspective about a problem concerning the $M/G/\infty$ queue which came up in \cite{brown1970m}. Therein, the author studies an estimation problem in queueing theory from the frequentist viewpoint. By the statistical setup of this paper, one has only access to general stationary and, according to the frequentist interpretation of probability, ergodic data. To approach this issue, Brown pretends as the data was i.i.d. and gives an estimator which resembles the empirical distribution function. However, one can find some words of criticism in his work which reads as
\begin{quote}
Although we have obtained an estimator for the c.d.f. G, it is clearly not the best estimator in any sense because we do not use all the information. The problem of finding a best estimator (according to any criterion) is still open.
\end{quote}
The present work gives an idea of what the suppressed information is as well as what an improved estimator might incorporate. However, in order to overcome rather non-telling technicalities, it was necessary to break down the problem into a discrete-time setting as well as to a finite state space. However, the questions about generalizing to a more general state spaces and to a continuous-time domain, respectively, are still open.

 It seems possible to obtain a similar construction exploiting theory of Markov chains on countably infinite state spaces. It is well known that for irreducible and positive recurrent stochastic matrices there exists a unique invariant distribution which might serve as the building blocks of a generalization of the theory just presented. However, from a theoretical viewpoint it seems more difficult to obtain a measurable structure on the generalized inverse limit construction since continuity would have to be achieved by studying perturbations of linear operators, see e.g. \cite{kato1966perturbation}. From a rather applied viewpoint the question is how the system of non-linear equations \eqref{N} can be reasonably approximated in case of countably infinite state spaces. 

The explicit model which was introduced in the present work represents a specific sampling scheme which bases on several independence assumptions. These assumptions, in a certain sense, reflect the lack of information about the possibility of cross-learning for the transitions of the observed process. At this point the model might be further specified if one has gained additional information about this cross-learning in advance. Notice that such information can be given by certainty of the specific process one does collect data from, which in turn shrinks the class of models. The $M/G/1$ queueing model might serve as an example for the possibility of cross-inference which even takes into account a countably infinite state space. In queueing theory it is well known that the $M/G/1$ queue is driven by an embedded Markov chain governed by a so called stochastic $\Delta$-matrix, c.f. \cite{abolnikov1991markov}. Observing data that stems from an $M/G/1$ queue, one can make cross-inference based on the specific shape of these stochastic matrices. For more details see also \cite{rohrscheidt2017service}. Considering the more general case of an arbitrary infinite stochastic matrix techniques of \cite{gibson1987augmented,gibson1987monotone} might be of interest in order to obtain Bayesian models for making inference for Markov chains (possibly of higher order) on countably infinite state spaces.

The question about generalizations with respect to some uncountable state space is an even deeper one. Therefor, the work of \cite{fortini2002mixtures} may give a first idea how complicated things become considering general Markov kernels and how overwhelmingly large the possible class of Bayesian models gets. More precisely, again the question of cross-learning for transitions come up here but in a even more involved manner. This is because one would need to clarify within the modeling how much one can learn for transitions ''being close´´ to observed ones. One might think about a stone being thrown onto a surface of some fluid. The question is then how fast do the waves, representing the ''strength of cross-learning´´, have to decay? The fact that the probability of an observation of any of such transitions is zero doesn't make it any easier.

Another difficult question is about Bayesian inference for processes evolving in continuous time. The first work on mixtures of additive processes is \cite{buhlmann1960austauschbare} followed by \cite{freedman1963invariants, freedman1996finetti} which additionally study mixtures of continuous-time Markov processes. Further approaches to symmetries of continuous-time processes can be found in \cite{kallenberg2006probabilistic}. Although the results obtained therein might be understood as the probabilistic fundament for Bayesian statistics for continuous-time stochastic processes, there is still a lack of decently working statistical models. 
One reason of that fact might be that collecting data from a continuous-time stochastic process at fixed discrete time points would lead to a rather discrete-time model in advance. On the other hand, collecting data in a high-frequency manner, similarly to the consideration of an uncountable state space, might cause cross-learning effects 
which would have to be incorporated in the statistical model as prior information.\\

\textbf{Acknowledgments:} The present work is an excerpt of my doctoral thesis \textit{Bayesian Nonparametric Inference for Queueing Systems} which I prepared at the Ruprecht-Karls-Universit\"{a}t Heidelberg within the DFG-project (German research foundation) grant 1953. I would like to thank my supervisors Prof. Rainer Dahlhaus and Dr. Cornelia Wichelhaus for their patience in letting me unfold the problem. I am also grateful to Prof. Jan Johannes who helped to improve the work in numerous discussions.

\bibliographystyle{imsart-nameyear}
\bibliography{mybibB4S}

\end{document}